\begin{document}
%
%
% THEOREM Environments (Examples)-----------------------------------------
%

 \newtheorem{thm}{Theorem}
 \newtheorem{cor}[thm]{Corollary}
 \newtheorem{lem}[thm]{Lemma}
 \newtheorem{prop}[thm]{Proposition}
 \theoremstyle{definition}
 \newtheorem{defn}[thm]{Definition}
\newtheorem{assumption}{Assumption}
\newtheorem{ex}{Example}
 \theoremstyle{remark}
 \newtheorem{rem}[thm]{Remark}
 
 \numberwithin{equation}{section}
 \def\e{{\rm e}}
 \def\x{\mathbf{x}}
\def\F{\mathcal{F}}
\def\R{\mathbb{R}}
\def\T{\mathbf{T}}
\def\N{\mathbb{N}}
\def\K{\mathbf{K}}
\def\Q{\mathbf{Q}}
\def\M{\mathbf{M}}
\def\O{\mathbf{O}}
\def\C{\mathbb{C}}
\def\P{\mathbf{P}}
\def\Z{\mathbb{Z}}
\def\H{\mathcal{H}}
\def\A{\mathbf{A}}
\def\V{\mathbf{V}}
\def\AA{\overline{\mathbf{A}}}
\def\B{\mathbf{B}}
\def\c{\mathbf{C}}
\def\L{\mathbf{L}}
\def\H{\mathcal{H}}
\def\I{\mathbf{I}}
\def\Y{{\rm Y}}
\def\f{\mathbf{f}}
\def\a{\mathbf{a}}
\def\z{\mathbf{z}}
\def\d{\hat{d}}
\def\y{\mathbf{y}}
\def\v{\mathbf{v}}
\def\w{\mathbf{w}}
\def\b{\mathbf{b}}
\def\s{\mathcal{S}}
\def\cc{\mathcal{C}}
\def\co{{\rm co}\,}
\def\vol{{\rm vol}\,}
\def\om{\mathbf{\Omega}}
\def\dom{{\rm Dom}}
\def\ite{{\rm Int}\,}

%-------------------------------------------------------------------------
% editorial commands: to be inserted by the editorial office
%
%\firstpage{1}
%\volume{228}
%\Copyrightyear{2004}
%\DOI{003-0001}
%
%
%\seriesextra{Just an add-on}
%\seriesextraline{This is the Concrete Title of this Book\br H.E. R and S.T.C. W, Eds.}
%
% for journals:
%
%\firstpage{1}
%\issuenumber{1}
%\Volumeandyear{1 (2004)}
%\Copyrightyear{2004}
%\DOI{003-xxxx-y}
%\Signet
%\commby{inhouse}
%\submitted{March 14, 2003}
%\received{March 16, 2000}
%\revised{June 1, 2000}
%\accepted{July 22, 2000}
%
%
%
%---------------------------------------------------------------------------
%Insert here the title, affiliations and abstract:
%
\title{On convex optimization without convex representation}

%----------Author 1
\author{JB. Lasserre}
\address{LAAS-CNRS and Institute of Mathematics\\
University of Toulouse\\
LAAS, 7 avenue du Colonel Roche\\
31077 Toulouse C\'edex 4\\
France}
\email{lasserre@laas.fr}

%----------classification, keywords, date
\subjclass{90C25 90C46 65K05}

\keywords{Convex optimization; convex programming; log-barrier}

\date{}
%----------additions
%\dedicatory{To my boss}
%%% ----------------------------------------------------------------------

\begin{abstract}
We consider the convex optimization problem
$\P:\min_\x \{f(\x)\,:\,\x\in\K\}$ where $f$ is convex continuously differentiable, and
$\K\subset\R^n$ is a compact convex set with representation
$\{\x\in\R^n\,:\:g_j(\x)\geq0, j=1,\ldots,m\}$ for some continuously differentiable
functions $(g_j)$. We discuss the case where the 
$g_j$'s are not all concave (in contrast with convex programming 
where they all are). In particular,
even if the $g_j$ are not concave, we consider the log-barrier function $\phi_\mu$
with parameter $\mu$, associated with $\P$, usually defined for concave functions $(g_j)$.
We then show that any limit point of any sequence $(\x_\mu)\subset\K$
of stationary points of $\phi_\mu$, $\mu\to0$,  is a Karush-Kuhn-Tucker point
of problem $\P$ and a global minimizer of $f$ on $\K$.
\end{abstract}

%%% ----------------------------------------------------------------------
\maketitle
%%% ----------------------------------------------------------------------
%\tableofcontents

\section{Introduction}~
%\baselineskip=1.5\baselineskip

%With $\R[x]$ being the ring of real polynomials in the variables $x_1,\ldots,x_n$, 
Consider the optimization problem
\begin{equation}
\label{defpb}
\P:\quad f^*:=\,\displaystyle\min_{\x}\:\{\:f(\x)\::\:\x\in \K\}.
\end{equation}
for some convex and continuously differentiable function $f:\R^n\to\R$,
and where the feasible set $\K\subset\R^n$ is defined by:
\begin{equation}
\label{setk}
\K:=\,\{\x\in\R^n\::\:g_j(\x)\geq 0,\quad j=1,\ldots,m\},
\end{equation}
for some continuously differentiable functions $g_j:\R^n\to\R$.
We say that $(g_j)$, $j=1,\ldots,m$, is a {\it representation} of $\K$.
When $\K$ is convex and the $(g_j)$ are concave we say that 
$\K$ has a convex representation.

In the literature, when $\K$ is convex $\P$ is referred to as a convex optimization problem 
and in particular, every local minimum of $f$ is a global minimum.
However, if on the one hand
{\it convex optimization} usually refers to minimizing a convex function on a convex set 
$\K$ without precising its representation $(g_j$) (see e.g. Ben-Tal and Nemirovsky \cite[Definition 5.1.1]{bental} or Bertsekas et al. \cite[Chapter 2]{bertsekas}), on the other hand
{\it convex programming} usually refers to the situation where 
the representation of $\K$ is also convex, i.e. when all the $g_j$'s are concave.
See for instance Ben-Tal and Nemirovski \cite[p. 335]{bental},
Berkovitz \cite[p. 179]{berkovitz},
Boyd and Vandenberghe \cite[p. 7]{boyd}, Bertsekas et al. \cite[\S 3.5.5]{bertsekas},
Nesterov and Nemirovski \cite[p. 217-218]{nesterov}, and Hiriart-Urruty \cite{jbhu}.
Convex programming is particularly interesting because under Slater's condition\footnote{Slater's condition holds if $g_j(\x_0)>0$ for some $\x_0\in\K$ and all $j=1,\ldots,m$.},
the standard Karush-Kuhn-Tucker (KKT) 
optimality conditions are not only necessary but also sufficient and
in addition, the concavity property of the $g_j$'s is used to prove 
convergence (and rates of convergence) of specialized algorithms.\\

To the best of our knowledge,
little is said in the literature for the specific case where 
$\K$ is convex but not necessarily its representation, that is, when the functions $(g_j)$ are {\it not} necessarily concave. It looks like outside the convex programming framework, all problems are treated the same. This paper is a companion paper to \cite{lasserre} where we proved that if 
the nondegeneracy condition 
\begin{equation}
\label{dege}
\forall j=1,\ldots,m:\quad 
\nabla g_j(\x)\neq0\quad \forall\x\in\K\:\mbox{with $g_j(\x)=0$}\end{equation}
holds, then 
$\x\in\K$ is a global minimizer of $f$ on $\K$ if and only if
$(\x,\lambda)$ is a KKT point for some $\lambda\in\R^m_+$.
This indicates that for convex optimization problems (\ref{defpb}),
and from the point of view of "first-order optimality conditions",
what really matters is the geometry of $\K$ rather than its representation.
Indeed, for {\it any} representation $(g_j)$ of $\K$ 
that satisfies the nondegeneracy condition (\ref{dege}),
there is a one-to-one correspondence between global minimizers and KKT points.

But what about from a computational viewpoint?
Of course, not all representations of $\K$ are
equivalent since the ability (as well as the efficiency)
of algorithms to obtain a KKT point of $\P$
will strongly depend on the representation $(g_j)$ of $\K$ which is used.
For example, algorithms that implement Lagrangian duality
would require the $(g_j)$ to be concave, those based on second-order methods would require
all functions $f$ and $(g_j)$ to be twice continuous differentiable, self-concordance
of a barrier function associated with a representation of $\K$ may or may not hold, etc.

When $\K$ is convex but not its representation $(g_j)$, several situations may
occur. 
In particular, the level set  $\{\x: g_j(\x)\geq a_j\}$ may be convex
for $a_j=0$ but not for some other values of $a_j>0$, in which case the $g_j$'s are 
not even quasiconcave on $\K$, i.e., one may say that $\K$ is convex {\it by accident}
for the value $\a=0$ of the parameter $\a\geq0$.
One might think that in this situation, algorithms that generate
a sequence of feasible points in the interior of $\K$ 
could run into problems to find a local minimum of $f$.  If the $-g_j$'s are all quasiconvex 
on $\K$, we say that we are in the generic convex case
because not only $\K$ but also all sets $\K_\a:=\{\x: g_j(\x)\geq\a_j, j=1,\ldots,m\}$ are convex. However, quasiconvex functions do not share some nice properties of the convex functions. In particular, (a) $\nabla g_j(\x)=0$ does not imply that $g_j$ reaches a local minimum at $\x$, (b) a local minimum is not necessarily global and (c), the sum of quasiconvex functions is not quasiconvex in general; see e.g. 
Crouzeix et al. \cite[p. 65]{crouzeix}. And so even in this case, 
for some minimization algorithms, convergence to a minimum of $f$ on $\K$
might be problematic.\\

So an interesting issue is to determine whether
there is an algorithm which converges to
a global minimizer of a convex function $f$ on $\K$, no matter if the representation of
$\K$ is convex or not. Of course, in view of \cite[Theorem 2.3]{lasserre}, a sufficient condition is that this algorithm provides
a sequence (or subsequence) of points $(\x_k,\lambda_k)\in\R^n\times\R^m_+$
converging to a KKT point of $\P$.\\

With $\P$ and a parameter $\mu>0$,
we associate the {\it log-barrier} function $\phi_\mu: \K\to\R\cup \{+\infty\}$
defined by
\begin{equation}
\label{phimu}
\x\mapsto \phi_\mu(\x)\,:=\,\left\{\begin{array}{rl}
f(\x)-\mu\,\displaystyle\sum_{j=1}^m\ln{g_j(\x)},&\mbox{if $g_j(\x)>0,\:\forall j=1,\ldots,m$}\\
+\infty,&\mbox{otherwise.}\end{array}\right.
\end{equation}
By a {\it stationary point} $\x\in\K$ of $\phi_\mu$, we mean a point
$\x\in\K$ with $g_j(\x)\neq0$ for all $j=1,\ldots,m$, and such that $\nabla\phi_\mu(\x)=0$.
Notice that in general and in contrast with the present paper,
$\phi_\mu$ (or more precisely $\psi_\mu:=\mu\phi_\mu$) is usually defined for convex problems $\P$ where all the $g_j$'s are concave; see e.g. Den Hertog \cite{hertog}
and for more details on the barrier functions and their properties, the interested reader is referred to G\"uler  \cite{guler0} and G\"uler and Tuncel \cite{guler}.

{\bf Contribution.} The purpose of this paper is to show 
that no matter which representation $(g_j)$ of a convex set $\K$ (assumed to be compact)
is used (provided it satisfies the nondegeneracy condition (\ref{dege})), 
any sequence of stationary points $(\x_\mu)$ of $\phi_\mu$, $\mu\to 0$, 
has the nice property that each of its accumulation points is a KKT point of $\P$ and hence, a global minimizer of $f$ on $\K$. Hence, to obtain the global minimum
of a convex function on $\K$ it is enough to 
minimize the log-barrier function for nonincreasing values of the parameter,
for any representation of $\K$ that satisfies the nondegeneracy condition (\ref{dege}).
Again and of course, the efficiency of the method will crucially depend on the representation of $\K$ which is used. For instance, in general $\phi_\mu$ will not have the self-concordance property, crucial for efficiency.

Observe that at first glance this result is a little surprising because as 
we already mentioned, there are examples of 
sets $\K_\a:=\{\x: g_j(\x)\geq a_j,\,j=1,\ldots,m\}$
which are non convex for every $0\neq \a\geq0$ but $\K:=\K_{0}$ is convex (by accident!) and (\ref{dege}) holds. So inside $\K$ the level sets of the $g_j$'s are not convex any more. Still, and even though the stationary points $\x_\mu$ of the associated 
log-barrier $\phi_\mu$ are inside $\K$, all converging subsequences of a sequence $(\x_\mu)$, $\mu\to 0$,
 will converge to some global minimizer $\x^*$ of $f$ on $\K$.
 In particular, if the global minimizer $\x^*\in\K$ is unique then the whole sequence
 $(\x_\mu)$ will converge. Notice that this happens even if the $g_j$'s are not log-concave,
 in which case $\phi_\mu$ may not be convex for all $\mu$ (e.g. if $f$ is linear).
 So what seems to really matter is the fact that
  as $\mu$ decreases, the convex function $f$ becomes more and more 
 important in $\phi_\mu$, and also that the functions $g_j$ which 
 matter in a KKT point $(\x^*,\lambda)$ are those for which 
 $g_j(\x^*)=0$ (and so with convex associated level set $\{\x:g_j(\x)\geq0\}$).

\section{Main result}
Consider the optimization problem
(\ref{defpb}) in the following context.
\begin{assumption}
\label{ass1}
The set $\K$ in (\ref{setk}) is convex and Slater's assumption holds.
Morover, the nondegeneracy condition 
\begin{equation}
\label{aa}
\nabla g_j(\x)\neq 0\quad\forall\,\x\in\K\:\mbox{such that}\:g_j(\x)=0,\end{equation}
holds for every $j=1,\ldots,m$.
\end{assumption}
Observe that when the $g_j$'s are concave then the nondegeneracy condition 
(\ref{aa}) holds automatically.
Recall that $(\x^*,\lambda)\in\K\times\R^m$ is a Karush-Kuhn-Tucker
(KKT) point of $\P$ if
\begin{itemize}
\item $\x\in\K$ and $\lambda\geq0$
\item $\lambda_jg_j(\x^*)=0$ for every $j=1,\ldots,m$
\item $\nabla f(\x^*)-\sum_{j=1}^m\lambda_j\nabla g_j(\x^*)=0$.
\end{itemize}
We recall the following result from \cite{lasserre}:
\begin{thm}[\cite{lasserre}]
\label{th-optimletters}
Let $\K$ be as in (\ref{setk}) and let Assumption \ref{ass1} hold.
Then $\x$ is a global minimizer of $f$ on $\K$ if and only
if there is some $\lambda\in\R^m_+$ such that $(\x,\lambda)$ is a KKT point of $\P$.
\end{thm}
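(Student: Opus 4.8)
The plan is to reduce the whole equivalence to a single geometric identity about the normal cone of $\K$, and then read off both KKT conditions from it. Since $f$ is convex and differentiable and $\K$ is convex, a point $\x^*\in\K$ is a global minimizer of $f$ on $\K$ if and only if the variational inequality $\langle\nabla f(\x^*),\x-\x^*\rangle\geq0$ holds for all $\x\in\K$, i.e. if and only if $-\nabla f(\x^*)$ lies in the normal cone $N_\K(\x^*)=\{v:\langle v,\x-\x^*\rangle\leq0\;\forall\x\in\K\}$. Writing $J:=\{j:g_j(\x^*)=0\}$ for the active set and $C:=\mathrm{cone}\{-\nabla g_j(\x^*):j\in J\}$ for the (polyhedral, hence closed) cone generated by the negative active gradients, the existence of $\lambda\in\R^m_+$ making $(\x^*,\lambda)$ a KKT point is precisely the statement $-\nabla f(\x^*)\in C$, complementary slackness forcing $\lambda_j=0$ off $J$. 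Thus the theorem follows once I establish $N_\K(\x^*)=C$.

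One inclusion, $C\subseteq N_\K(\x^*)$, is easy and already gives the ``if'' direction. Fix $j\in J$ and $\x\in\K$; by convexity of $\K$ the segment $[\x^*,\x]$ lies in $\K$, so $t\mapsto g_j(\x^*+t(\x-\x^*))$ is nonnegative on $[0,1]$ and vanishes at $t=0$, whence its right derivative at $0$, namely $\langle\nabla g_j(\x^*),\x-\x^*\rangle$, is nonnegative. Hence $-\nabla g_j(\x^*)\in N_\K(\x^*)$ for each active $j$, and since $N_\K(\x^*)$ is a convex cone, $C\subseteq N_\K(\x^*)$. If $(\x^*,\lambda)$ is a KKT point then $-\nabla f(\x^*)\in C\subseteq N_\K(\x^*)$, and convexity of $f$ gives $f(\x)\geq f(\x^*)+\langle\nabla f(\x^*),\x-\x^*\rangle\geq f(\x^*)$ for every $\x\in\K$.

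The reverse inclusion $N_\K(\x^*)\subseteq C$ is the crux and is where Assumption \ref{ass1} enters. Passing to polars it suffices to prove $T_\K(\x^*)=C^\circ=\{d:\langle\nabla g_j(\x^*),d\rangle\geq0\;\forall j\in J\}$, for then $N_\K(\x^*)=T_\K(\x^*)^\circ=C^{\circ\circ}=C$ by closedness of $C$. The inclusion $T_\K(\x^*)\subseteq C^\circ$ follows at once from the estimate above. For the converse I would first show that the open cone $\{d:\langle\nabla g_j(\x^*),d\rangle>0\;\forall j\in J\}$ is contained in $T_\K(\x^*)$: for such $d$ a first-order expansion gives $g_j(\x^*+td)>0$ for small $t>0$ at active $j$, while $g_j(\x^*)>0$ handles the inactive $j$ by continuity, so $\x^*+td\in\K$ for small $t>0$. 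Since $T_\K(\x^*)$ is closed and the closure of this open cone is all of $C^\circ$, we obtain $C^\circ\subseteq T_\K(\x^*)$ \emph{provided the open cone is nonempty}.

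Showing that the open cone is nonempty is the main obstacle, and this is exactly where nondegeneracy and Slater's condition are indispensable. By Slater, $\mathrm{int}\,\K\neq\emptyset$, so $N_\K(\x^*)$ contains no line (a line would force $\K$ into a hyperplane); in particular its subcone $C$ is pointed, $C\cap(-C)=\{0\}$. I would then argue by contradiction: if some nontrivial nonnegative combination $\sum_{j\in J}\mu_j\nabla g_j(\x^*)=0$ had $\mu_k>0$, solving for $\nabla g_k(\x^*)$ shows $-\nabla g_k(\x^*)$ lies in both $C$ and $-C$, whence $\nabla g_k(\x^*)=0$, contradicting (\ref{aa}). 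Therefore $0\notin\mathrm{conv}\{\nabla g_j(\x^*):j\in J\}$, and Gordan's theorem of the alternative produces a $d$ with $\langle\nabla g_j(\x^*),d\rangle>0$ for all $j\in J$, so the open cone is nonempty. This yields $T_\K(\x^*)=C^\circ$, hence $N_\K(\x^*)=C$; combined with the variational characterization $-\nabla f(\x^*)\in N_\K(\x^*)$ of a minimizer, it produces $\lambda\in\R^m_+$ with $(\x^*,\lambda)$ a KKT point, completing the ``only if'' direction.
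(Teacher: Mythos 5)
Your proof is correct, but note that this paper never proves Theorem \ref{th-optimletters} itself --- it imports it from the companion paper \cite{lasserre} --- so the comparison is with that proof, whose key mechanism also surfaces inside this paper's proof of Theorem \ref{th-main}. The cited proof runs on two elementary facts: (i) for $\x^*\in\K$ with $g_j(\x^*)=0$ and any $\y\in\K$, convexity of $\K$ and $g_j\geq 0$ on $\K$ force $\langle\nabla g_j(\x^*),\y-\x^*\rangle\geq 0$, which combined with convexity of $f$ gives sufficiency of KKT with no further hypotheses; and (ii) at a Slater point $\x_0$ the inequality is strict, $\langle\nabla g_j(\x^*),\x_0-\x^*\rangle>0$ for every active $j$ (by the small-ball argument around $\x_0$ plus nondegeneracy (\ref{aa}), exactly as in the proof of Theorem \ref{th-main} above), which is the Mangasarian--Fromovitz condition and yields necessity by standard first-order theory. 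You prove instead the sharper, representation-independent identity $N_\K(\x^*)=\mathrm{cone}\{-\nabla g_j(\x^*):j\in J\}$ via the tangent cone, bipolarity, and closedness of finitely generated cones, and you manufacture the strictly feasible direction abstractly: Slater gives $\mathrm{int}\,\K\neq\emptyset$, hence $N_\K(\x^*)$ (and so $C$) is pointed, hence by your cancellation argument no nontrivial nonnegative combination of active gradients vanishes, and Gordan's theorem supplies $d$ with $\langle\nabla g_j(\x^*),d\rangle>0$ for all $j\in J$. Every step checks out, including the edge case $J=\emptyset$ and the density of the open cone in $C^\circ$ once it is nonempty. The trade-off: your route is heavier machinery than needed for the theorem --- the direct choice $d:=\x_0-\x^*$ short-circuits the entire pointedness/Gordan detour, and this is precisely what \cite{lasserre} and the proof of Theorem \ref{th-main} do --- but in exchange you obtain more than the statement asks: an exact description of $N_\K(\x^*)$ from \emph{any} representation satisfying (\ref{aa}), which makes the ``geometry of $\K$ rather than its representation'' theme of the paper literal and would be reusable elsewhere, whereas the paper's argument only exhibits one multiplier vector.
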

The next result is concerned with the log-barrier $\phi_\mu$ in (\ref{phimu}).
\begin{lem}
\label{lemma1}
Let $\K$ in (\ref{setk}) be convex and compact and assume that Slater's condition holds. Then for every $\mu>0$ the log-barrier function $\phi_\mu$ in (\ref{phimu}) has at least one 
stationary point on $\K$ (which is a global minimizer of $\phi_\mu$ on $\K$).
\end{lem}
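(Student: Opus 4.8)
The plan is to realize the desired stationary point as a global minimizer of $\phi_\mu$ and then apply the first-order optimality condition on an open domain. First I would identify the effective domain of $\phi_\mu$, namely the set $U:=\{\x\in\R^n:g_j(\x)>0,\ j=1,\ldots,m\}$. Since $U=\bigcap_{j}g_j^{-1}((0,\infty))$ is a finite intersection of preimages of open sets under continuous maps, it is open in $\R^n$; it is contained in $\K$ (as $g_j>0\Rightarrow g_j\geq0$) and is nonempty because Slater's condition furnishes a point $\x_0$ with $g_j(\x_0)>0$ for all $j$, so $\x_0\in U$. On $U$ each $\ln g_j$ is finite and continuous, hence $\phi_\mu$ is finite, continuous, and (since $f$ and the $g_j$ are $C^1$ with $g_j>0$) continuously differentiable there.

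Next I would produce a minimizer by a coercivity argument adapted to the barrier. Set $c:=\phi_\mu(\x_0)<+\infty$ and take a minimizing sequence $(\x_k)\subset U$ with $\phi_\mu(\x_k)\to\inf_U\phi_\mu\leq c$. Because $\K$ is compact and $\x_k\in U\subset\K$, I can pass to a subsequence converging to some $\bar\x\in\K$, so $g_j(\bar\x)\geq0$ for every $j$. The crux is to certify that $\bar\x$ does not lie on the boundary. Here compactness enters quantitatively: each continuous $g_j$ is bounded above on $\K$, say $g_j\leq M_j$, so $-\ln g_j(\x)\geq-\ln M_j$ on $U$; thus every logarithmic term is bounded below and none can drift to $-\infty$. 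Consequently, if some $g_{j_0}(\bar\x)=0$ then $-\ln g_{j_0}(\x_k)\to+\infty$, while $f(\x_k)$ stays bounded (continuity on the compact $\K$) and the remaining barrier terms stay bounded below, forcing $\phi_\mu(\x_k)\to+\infty$ and contradicting $\phi_\mu(\x_k)\leq c$. Therefore $g_j(\bar\x)>0$ for all $j$, i.e.\ $\bar\x\in U$.

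Finally, by continuity of $\phi_\mu$ on $U$ I obtain $\phi_\mu(\bar\x)=\lim_k\phi_\mu(\x_k)=\inf_U\phi_\mu$, so $\bar\x$ is a global minimizer of $\phi_\mu$ over $U$, equivalently over $\K$ under the convention $\phi_\mu\equiv+\infty$ off $U$. Since $U$ is open and $\phi_\mu$ is differentiable on $U$, the first-order optimality condition (Fermat's rule) gives $\nabla\phi_\mu(\bar\x)=0$. As $g_j(\bar\x)\neq0$ for all $j$, the point $\bar\x$ is exactly a stationary point in the sense of the paper, and it is a global minimizer, as required.

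The hard part is the middle step: certifying that the minimizing sequence cannot escape to the boundary of $U$. This is precisely where the barrier does its work, and the only input needed is that compactness bounds each $g_j$ from above, so the logarithmic terms are bounded below and the sole possible divergence of $\phi_\mu$ near the boundary is to $+\infty$. I note that neither convexity of $\K$ nor the nondegeneracy condition (\ref{aa}) is used in this existence argument; only compactness, Slater's condition, and continuity of $f$ and the $g_j$ are invoked.
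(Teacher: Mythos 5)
Your proof is correct and follows essentially the same route as the paper's: both establish that $\phi_\mu$ blows up to $+\infty$ near the boundary of its domain (using that each $g_j$ is bounded above and $f$ is bounded below on the compact set $\K$), conclude that a global minimizer exists strictly inside $\{\x : g_j(\x)>0,\ \forall j\}$, and then apply the first-order condition on that open set. Your write-up merely spells out the minimizing-sequence details that the paper compresses into the word ``coercive,'' and your closing observation that convexity of $\K$ and the nondegeneracy condition (\ref{aa}) are not needed is consistent with the paper's own proof, which likewise uses only compactness, Slater's condition, and continuity.
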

\begin{proof}
Let $f^*$ be the minimum of $f$ on $\K$ and let $\mu>0$ be fixed, arbitrary. We first show that $\phi_\mu(\x_k)\to\infty$ as 
$\x_k\to \partial\K$ (where $(\x_k)\subset\K$). Indeed, pick up an index $i$ such that
$g_i(\x_k)\to0$ as $k\to\infty$. Then $\phi_\mu(\x_k)\geq f^*-\mu\ln g_i(\x_k)-(m-1)\ln C$
(where 	all the $g_j$'s are bounded above by $C$). So $\phi_\mu$ is coercive and therefore
must have a (global) minimizer $\x_\mu\in\K$ with $g_j(\x_\mu)>0$ for every $j=1,\ldots,m$; and so $\nabla\phi_\mu(\x_\mu)=0$.
\end{proof}
Notice that $\phi_\mu$ may have several stationary points in $\K$. We now state our main result.
\begin{thm}
\label{th-main}
Let $\K$ in (\ref{setk}) be compact and let Assumption \ref{ass1} hold true.
For every fixed $\mu>0$, choose $\x_\mu\in\K$ to be an arbitrary stationary point of $\phi_\mu$ in $\K$.

Then every accumulation point $\x^*\in\K$ of such a sequence $(\x_\mu)\subset\K$ with $\mu\to 0$, is a global minimizer of $f$ on $\K$, and if $\nabla f(\x^*)\neq0$, $\x^*$ is
a KKT point of $\P$.
\end{thm}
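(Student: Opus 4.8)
The plan is to read candidate Karush--Kuhn--Tucker multipliers directly off the stationarity condition and to let $\mu\to0$. Writing out $\nabla\phi_\mu(\x_\mu)=0$ gives
\[
\nabla f(\x_\mu)=\sum_{j=1}^m\lambda_j^\mu\,\nabla g_j(\x_\mu),\qquad \lambda_j^\mu:=\frac{\mu}{g_j(\x_\mu)}>0 .
\]
Since $\K$ is compact I may pass to a subsequence along which $\x_\mu\to\x^*\in\K$; let $J^*:=\{j:g_j(\x^*)=0\}$ be the active set. For every inactive index $j\notin J^*$ one has $g_j(\x^*)>0$, hence $\lambda_j^\mu\to0$. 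The sign condition $\lambda^\mu\ge0$ is automatic; once a limit $\lambda=\lim\lambda^\mu$ is available, complementary slackness $\lambda_jg_j(\x^*)=0$ holds trivially (either $\lambda_j=0$ for $j\notin J^*$, or $g_j(\x^*)=0$ for $j\in J^*$), and the displayed gradient identity passes to the limit by continuity of $\nabla f$ and the $\nabla g_j$. Thus the only thing that can prevent $(\x^*,\lambda)$ from being a KKT point is a failure of the active multipliers $(\lambda_j^\mu)_{j\in J^*}$ to stay bounded.

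I would first dispose of the degenerate case. If $\nabla f(\x^*)=0$ (in particular whenever $J^*=\emptyset$, i.e. $\x^*$ lies in the interior of $\K$), then $\x^*$ is an unconstrained stationary point of the convex function $f$ and hence a global minimizer of $f$ on $\R^n\supseteq\K$; moreover $(\x^*,0)$ is trivially a KKT point. So assume from now on that $\nabla f(\x^*)\neq0$, which forces $J^*\neq\emptyset$.

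The heart of the argument is the boundedness of $(\lambda_j^\mu)_{j\in J^*}$, and here the three hypotheses come together. First, for each $j\in J^*$ and each $\y\in\K$ the segment $t\mapsto\x^*+t(\y-\x^*)$ lies in $\K$ by convexity, so $g_j$ is nonnegative on it and vanishes at $t=0$; hence $t=0$ minimizes this restriction and its one-sided derivative is nonnegative, giving
\[
\nabla g_j(\x^*)\cdot(\y-\x^*)\ge0\qquad\forall\,\y\in\K .
\]
Equivalently $-\nabla g_j(\x^*)$ lies in the normal cone $N_\K(\x^*)$. Slater's condition makes $\K$ full dimensional, so $N_\K(\x^*)$ is pointed (it contains no line, since otherwise $\K$ would lie in a hyperplane), while the nondegeneracy condition (\ref{aa}) guarantees $\nabla g_j(\x^*)\neq0$ for $j\in J^*$. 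Now suppose, for contradiction, that $s_\mu:=\sum_{j\in J^*}\lambda_j^\mu\to\infty$ along the subsequence. Dividing the gradient identity by $s_\mu$ and passing to a further subsequence on which the normalized weights $\lambda_j^\mu/s_\mu$ converge in the simplex to some $\alpha\ge0$ with $\sum_{j\in J^*}\alpha_j=1$, the left-hand side tends to $0$ (as $\nabla f(\x_\mu)$ stays bounded) and the inactive terms vanish, leaving
\[
\sum_{j\in J^*}\alpha_j\,\nabla g_j(\x^*)=0 .
\]
This exhibits a nontrivial nonnegative combination of the nonzero vectors $\{-\nabla g_j(\x^*)\}_{j\in J^*}$ of the pointed cone $N_\K(\x^*)$ equal to $0$, which is impossible. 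Hence $(\lambda_j^\mu)$ is bounded; extracting a convergent subsequence $\lambda^\mu\to\lambda\ge0$ and passing to the limit shows that $(\x^*,\lambda)$ is a KKT point of $\P$, and Theorem \ref{th-optimletters} then yields that $\x^*$ is a global minimizer of $f$ on $\K$.

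The main obstacle is precisely this boundedness step; everything else is continuity together with the definition of the multipliers. The two delicate points are (i) the one-sided derivative argument placing $-\nabla g_j(\x^*)$ in $N_\K(\x^*)$, which exploits convexity of $\K$ but crucially \emph{not} concavity of the $g_j$, and (ii) the fact that nonzero inward normals of a full-dimensional convex set admit no nontrivial nonnegative dependence. This is the exact place where Slater's condition (pointedness via full dimensionality) and the nondegeneracy condition (nonvanishing gradients) are both indispensable.
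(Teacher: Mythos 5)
Your proof is correct, and while its skeleton (read the multipliers $\lambda_j^\mu=\mu/g_j(\x_\mu)$ off the stationarity identity, note the inactive ones vanish, prove the active ones bounded, pass to the limit, invoke Theorem \ref{th-optimletters}) matches the paper's, the decisive boundedness step is handled by a genuinely different mechanism. Both proofs start from the same observation --- since $g_j\geq0$ on the convex set $\K$ and $g_j(\x^*)=0$, one has $\langle\nabla g_j(\x^*),\y-\x^*\rangle\geq0$ for all $\y\in\K$, with $\nabla g_j(\x^*)\neq0$ by (\ref{aa}) --- but the paper then uses the Slater point $\x_0$ as a test vector: it proves $\langle\nabla g_j(\x^*),\x_0-\x^*\rangle>0$ and pairs (\ref{cn}) with $\x_0-\x^*$, discarding the other terms of the sum so as to bound each ratio $\mu/g_j(\x_{\mu_\ell})$ individually by $\langle\nabla f(\x^*),\x_0-\x^*\rangle/\langle\nabla g_j(\x^*),\x_0-\x^*\rangle$. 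You instead argue by contradiction, normalizing by $s_\mu=\sum_{j\in J^*}\lambda_j^\mu$ and contradicting pointedness of the normal cone $N_{\K}(\x^*)$, so that Slater's condition enters as full-dimensionality of $\K$ rather than as strict positivity against one fixed direction; these are two faces of the same geometric fact, since $-\nabla g_j(\x^*)$ is a nonzero (outward, not inward, a harmless slip in your wording) normal at $\x^*$ and an interior point sees every nonzero normal at a strictly negative angle. What the paper's route buys is an explicit quantitative bound on each active multiplier; its cost is a sign analysis of the cross terms, and indeed the printed claim that ``all terms in the summand are nonnegative'' is not literally true for inactive indices $k\notin J$ when $g_k$ is nonconcave (those terms merely tend to $0$, and even for $k\in J$ the inner products $\langle\nabla g_k(\x_{\mu_\ell}),\x_0-\x^*\rangle$ are only eventually positive), so the paper's argument tacitly requires exactly the asymptotic repair that your normalization sidesteps entirely: in your version all active terms carry nonnegative simplex weights, inactive terms vanish, and no sign condition on cross terms is ever invoked. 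The one fact you assert without proof --- that a nontrivial nonnegative combination of nonzero elements of a pointed convex cone cannot vanish --- deserves a line (if $\sum_j\alpha_j v_j=0$ with $\alpha_{j_0}>0$, then $\alpha_{j_0}v_{j_0}\in N_{\K}(\x^*)\cap(-N_{\K}(\x^*))=\{0\}$, forcing $v_{j_0}=0$), but it is immediate; and your merging of the paper's first case ($g_j(\x^*)>0$ for all $j$) into the case $\nabla f(\x^*)=0$ is a clean simplification that matches the theorem's phrasing.
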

\begin{proof}
Let $\x_\mu\in\K$ be a stationary point of $\phi_\mu$, which by Lemma \ref{lemma1} is guaranteed to exist. So
\begin{equation}
\label{cn}
\nabla\phi_\mu(\x_\mu)\,=\,\nabla f(\x_\mu)-\sum_{j=1}^m\frac{\mu}{g_j(\x_\mu)}
\nabla g_j(\x_\mu)\,=\,0.
\end{equation}
As $\mu\to 0$ and $\K$ is compact,
there exists $\x^*\in\K$ and a subsequence $(\mu_\ell)\subset\R_+$ 
such that $\x_{\mu_\ell}\to\x^*$ as $\ell\to\infty$.
We need consider two cases:

{\it Case when $g_j(\x^*)>0,\,\forall j=1,\ldots,m$.}
Then as $f$ and $g_j$ are continuously differentiable, $j=1,\ldots,m$, taking limit 
in (\ref{cn}) for the subsequence $(\mu_\ell)$, yields $\nabla f(\x^*)=0$
which, as $f$ is convex, implies that $\x^*$ is a global minimizer of $f$ on $\R^n$,
hence on $\K$.

{\it Case when $g_j(\x^*)=0$ for some $j\in\{1,\ldots,m\}$.}
Let $J:=\{j\,:\,g_j(\x^*)=0\}\neq\emptyset$. We next show that for every $j\in J$,
the sequence of ratios $(\mu/g_j(\x_{\mu_\ell})$, $\ell=1,\ldots$,  is bounded. Indeed
let $j\in J$ be fixed arbitrary. As Slater's condition holds,
let $\x_0\in\K$ be such that $g_j(\x_0)>0$ for all $j=1,\ldots,m$; then
$\langle \nabla g_j(\x^*),\x_0-\x^*\rangle>0$.
Indeed, as $\K$ is convex, $\langle \nabla g_j(\x^*),\x_0+\v-\x^*\rangle\geq0$ 
for all $\v$ in some small enough ball $\B(0,\rho)$ around the origin.
So if $\langle \nabla g_j(\x^*),\x_0-\x^*\rangle=0$ then 
$\langle \nabla g_j(\x^*),\v\rangle\geq0$ for all $\v\in \B(0,\rho)$, in contradiction
with $\nabla g_j(\x^*)\neq0$. Next,

\begin{eqnarray}
\label{aux}
\langle\nabla f(\x_{\mu_\ell}),\x_0-\x^*\rangle&=&
\underbrace{\sum_{k\not\in J}^m\frac{\mu_\ell}{g_k(\x_{\mu_\ell})}
\langle \nabla g_k(\x_{\mu_\ell}),\x_0-\x^*\rangle}_{A_\ell}\\
\nonumber
&&+
\underbrace{\sum_{k\in J}^m\frac{\mu_\ell}{g_k(\x_{\mu_\ell})}
\langle \nabla g_k(\x_{\mu_\ell}),\x_0-\x^*\rangle}_{B_\ell}\end{eqnarray}
Observe that in (\ref{aux}):
\begin{itemize}
\item Every term of the sum $B_\ell$ is nonnegative for sufficiently large $\ell$, say $\ell\geq\ell_0$, because $\x_{\mu_\ell}\to\x^*$ and
$\langle \nabla g_k(\x^*),\x_0-\x^*\rangle>0$ for all $k\in J$.
\item $A_\ell\to 0$ as $\ell\to\infty$ because $\mu_\ell\to0$ and $g_k(\x_{\mu_\ell})\to g_k(\x^*)>0$ for all $k\not\in J$.
\end{itemize}
Therefore $\vert A_\ell\vert\leq A$ for all sufficiently large $\ell$, say $\ell\geq\ell_1$, and so for every $j\in J$:
\[\langle\nabla f(\x_{\mu_\ell}),\x_0-\x^*\rangle+A
\geq\frac{\mu_\ell}{g_j(\x_{\mu_\ell})}\langle \nabla g_j(\x_{\mu_\ell}),\x_0-\x^*\rangle,\quad \ell\geq\ell_2:=\max[\ell_0,\ell_1],\]
which shows that for every $j\in J$, the nonnegative sequence $(\mu_\ell/g_j(\x_{\mu_\ell}))$, $\ell\geq\ell_2$, is bounded from above.

So take a subsequence (still denoted $(\mu_\ell)$, $\ell\in\N$, for convenience) such that
the ratios $\mu_\ell/g_j(\x_{\mu_\ell})$ converge for all $j\in J$, that is,
\[\lim_{\ell\to\infty}\,\frac{\mu_\ell}{g_j(\x_{\mu_\ell})}\,=\,\lambda_j\geq0,\qquad \forall\,j\in J,\]
and let $\lambda_j:=0$ for every $j\not\in J$, so that
$\lambda_jg_j(\x^*)=0$ for every $j=1,\ldots,m$. Taking limit in (\ref{cn}) as $\ell\to\infty$, yields:
\begin{equation}
\label{kkt}
\nabla f(\x^*)\,=\,\sum_{j=1}^m\lambda_j\,\nabla g_j(\x^*),
\end{equation}
which shows that $(\x^*,\lambda)\in\K\times \R^m_+$ is a KKT point for $\P$.
Finally, invoking Theorem 1, $\x^*$ is also a global minimizer of $\P$. 
\end{proof}
\subsection{Discussion}
The log-barrier function $\phi_\mu$ or its exponential variant 
$f+\mu\sum g_j^{-1}$ has become popular since the pioneer work
of Fiacco and McCormick \cite{fiacco1,fiacco2}, where it is assumed that $f$ and the $g_j$'s are
twice continuously differentiable, the $g_j$'s are concave\footnote{In fact as noted in \cite{fiacco1}, 
concavity of the $g_j$'s is merely a sufficient condition for the barrier function to be convex.}, Slater's condition holds, the set
$\K\cap\{\x\,:\,f(\x)\leq k\}$ is bounded for every finite $k$, and finally, the barrier function is strictly convex for every 
value of the parameter $\mu>0$. Under such conditions, the barrier function $f+\mu\sum g_j^{-1}$ 
has a unique minimizer
$\x_\mu$ for every $\mu>0$ and the sequence $(\x_\mu,(\mu/g_j(\x_\mu)^2)\subset\R^{n+m}$ converges to a Wolfe-dual feasible point.

In contrast, Theorem \ref{th-main} states that without assuming concavity of
the $g_j$'s, one may obtain a global minimizer of
$f$ on $\K$, by looking at {\it any} limit point of {\it any} sequence of 
stationary points $(\x_\mu)$, $\mu\to 0$, 
of the log-barrier function $\phi_\mu$ associated with 
a representation $(g_j)$ of $\K$, provided that the representation
satisfies the nondegeneracy condition (\ref{dege}). To us, this comes as a little surprise as 
the stationary points $(\x_\mu)$ are all inside $\K$, and there are examples
of convex sets $\K$ with a representation $(g_j)$ satisfying (\ref{dege})
and such that the level sets $\K_\a=\{\x : g_j(\x)\geq a_j\}$ with $a_j>0$, are not convex! (See Example \ref{ex1}.)
Even if $f$ is convex, the log-barrier function $\phi_\mu$ need not be convex;
for instance if $f$ is linear, $\nabla^2\phi_\mu=-\mu\sum_j\nabla^2 \ln g_j$,
and so if the  $g_j$'s are not log-concave then $\phi_\mu$ may not
be convex on $\K$ for every value of the parameter $\mu>0$.

\begin{ex}
\label{ex1}
Let $n=2$ and $\K_a:=\{\x\in\R^2\,:\,g(\x)\geq a\}$ with
$\x\mapsto g(\x):=4-((x_1+1)^2+x_2^2)((x_1-1)^2+x_2^2)$, 
with $a\in\R$. The set $\K_a$ is convex only for those values
of $a$ with $a\leq0$; see in Figure \ref{contour}. It is even disconnected for $a=4$.
\begin{figure}[ht]
\centering
%\begin{center}
\resizebox{0.8\textwidth}{!}
%{\includegraphics{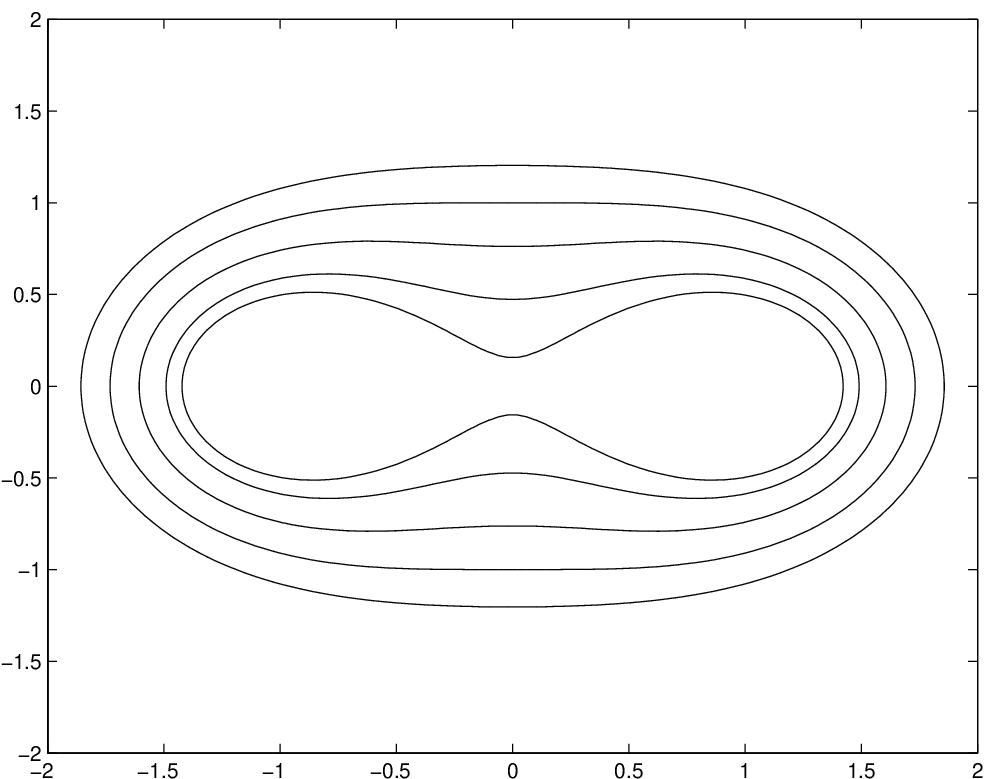}}
{\includegraphics{contou}}
%\hspace{1cm}
%\includegraphics{sqrt2}}
\caption{Example \ref{ex1}: Level sets $\{\x:\,g(\x)=a\}$ for $a=2.95,2.5,1.5,0$ and $-2$}
\label{contour}
%\end{center}
\end{figure}
\end{ex}
We might want to consider a generic situation, that is, when
the set 
\[\K_\a:=\,\{\x\in\R^n\::\:g_j(\x)\geq a_j,\quad j=1,\ldots,m\},\]
is also convex for every positive vector $0\leq\a=(a_j)\in\R^m$. 
This in turn would imply that the $g_j$ are {\it quasiconcave}\footnote{Recall that
on a convex set $O\subset\R^n$, a function $f: O\to\R$ is quasiconvex if the level sets
$\{\x\,:\,f(\x)\leq r\}$ are convex for every $r\in\R$. A function $f: O\to\R$ is said to be 
quasiconcave if $-f$ is quasiconvex; see e.g. \cite{crouzeix}.} on $\K$.
In particular, if the nondegeneracy condition (\ref{dege}) holds on $\K$
and the $g_j$'s are twice differentiable,
then at most one eigenvalue of the Hessian $\nabla^2g_j$ (and hence $\nabla^2\ln g_j$) 
is possibly positive (i.e., $\ln g_j$ is {\it almost} concave). This is because
for every $\x\in\K$ with $g_j(\x)=0$, 
one has $\langle\v,\nabla^2 g_j(\x)\v\rangle\leq0$ for all $\v\in\nabla g_j(\x)^{\perp}$
(where $\nabla g_j(\x)^{\perp}:=\{\v:\langle\nabla g_j(\x),\v\rangle=0\}$).
However, 
even in this situation, the log-barrier function $\phi_\mu$ may not be convex.
On the other hand, $\ln g_j$ is "more" concave than $g_j$ on $\ite\K$ because
its Hessian $\nabla^2g_j$ satisfies
$g_j^2\nabla^2 \ln g_j=g_j \nabla^2g_j -\nabla g_j\,(\nabla g_j)^T$.
But still, $g_j$ might not be log-concave on $\ite\K$, and so $\phi_\mu$ may not be
convex at least for values of $\mu$ not too small (and for all values of $\mu$ if $f$ is linear).

\begin{ex}
\label{ex2}
Let $n=2$ and $\K:=\{\x: g(\x)\geq 0,\x\geq0\}$ with $\x\mapsto g(\x)=x_1x_2-1$.
The representation of $\K$ is not convex but the $g_j$'s are log-concave, and so
the log-barrier $\x\mapsto \phi_\mu(\x):=f\x)-\mu(\ln g(\x)-\ln x_1-\ln x_2)$ is convex.
\end{ex}
\begin{ex}
\label{ex3}
Let $n=2$ and $\K:=\{\x: g_1(\x)\geq 0;\,a-x_1\geq0;\,0\leq x_2\leq b\}$ with $\x\mapsto g_1(\x)=x_1/(\epsilon+x_2^2)$ with $\epsilon  >0$. 
The representation of $\K$ is not convex and $g_1$ is not log-concave.
If $f$ is linear and $\epsilon$ is small enough, the log-barrier 
\[\x\mapsto \phi_\mu(\x):=f(\x)-\mu(\ln x_1+\ln (a-x_1)-\ln(\epsilon+x_2^2)+\ln x_2+\ln (b-x_2))\]
is not convex for every value of $\mu>0$.
\end{ex}

\subsection*{Acknowledgement} The author wishes to thank two anonymous referees for pointing out a mistake and providing suggestions to improve 
the initial version of this paper.


\begin{thebibliography}{1}

\bibitem{bental}
A. Ben-Tal, A. Nemirovski. {\em Lectures on Modern Convex Optimization: {\small Analysis, Algorithms, and Engineering Applications}}, SIAM, Philadelphia, 2001.
\bibitem{berkovitz}
L.D. Berkovitz. {\em Convexity and Optimization in $\R^n$}, John Wiley \& Sons, Inc., 2002.
\bibitem{bertsekas}
D. Bertsekas, A. Nedi\'{c}, E. Ozdaglar. {\em Convex Analysis and Optimization}, Athena Scientific, Belmont, Massachusetts, 2003.
\bibitem{boyd}
S. Boyd, L. Vandenberghe. {\em Convex Optimization}, Cambridge University Press, Cambridge, 2004.
\bibitem{crouzeix}
J-P. Crouzeix, A. Eberhard, D. Ralph. A geometrical insight on pseudoconvexity and pseudomonotonicity, {\it Math. Program. Ser. B} {\bf 123} (2010), 61--83.
\bibitem{hertog}
D. den Hertog. \textit{Interior Point Approach to Linear, 
 Quadratic and Convex Programming}, Kluwer, Dordrecht, 1994. 
 \bibitem{fiacco1}
 A.V. Fiacco, G.P. McCormick. The sequential unconstrained minimization technique for nonlinear programming, a primal-dual method, Manag. Sci. {\bf 10} (1964), 360--366.
 \bibitem{fiacco2}
 A.V. Fiacco, G.P. McCormick. Computational algorithm for the sequential unconstrained minimization technique for nonlinear programming, Manag. Sci. {\bf 10} (1964), 601--617.
 \bibitem{guler0}
O. G\"uler. Barrier functions in interior point methods,
{\it Math. Oper. Res.} {\bf 21} (1996), 860--885
\bibitem{guler}
O. G\"uler, L. Tuncel. Characterization of the barrier
parameter of homogeneous convex cones, {\it Math. Progr.} 
{\bf 81} (1998),  55--76.
\bibitem{jbhu}
J.-B. Hiriart-Urruty. {\em Optimisation et Analyse Convexe}, Presses Universitaires de France, 1998.
\bibitem{lasserre}
J.B. Lasserre. On representations of the feasible set in convex optimization,
{\it Optim. Letters} {\bf 4} (2010), 1--7.
\bibitem{nesterov}
Y. Nesterov, A. Nemirovskii. {\em Interior-Point Polynomial Algorithms in Convex Programming},
SIAM, Philadelphia, 1994.
\bibitem{polyak}
B.T. Polyak. {\em Introduction to Optimization}, Optimization Software, Inc., New York, 1987.
\bibitem{schneider}
R. Schneider. {\em Convex Bodies: The Brunn-Minkowski Theory}, Cambridge University Press, Cambridge, UK (1994).
\end{thebibliography}
\end{document}